\documentclass[12pt,a4paper]{amsart}
\input xy
\xyoption{all}
\usepackage{amssymb}
\usepackage{amsmath}
\usepackage{amscd}
\usepackage{latexsym}
\begin{document}

\newtheorem{lemma}{Lemma}[section]
\newtheorem{prop}[lemma]{Proposition}
\newtheorem{cor}[lemma]{Corollary}
\newtheorem{thm}[lemma]{Theorem}
\newtheorem{con}[lemma]{Conjecture}

\theoremstyle{definition}
\newtheorem{rem}[lemma]{Remark}
\newtheorem{rems}[lemma]{Remarks}
\newtheorem{defi}[lemma]{Definition}
\newtheorem{ex}[lemma]{Example}
\newcommand{\C}{\mathbb C}
\newcommand{\R}{\mathbb R}
\newcommand{\Z}{\mathbb Z}
\newcommand{\MaxG}{C^{\ast}G}
\newcommand{\vNG}{{\mathcal N}\!(G)}
\newcommand{\vNH}{{\mathcal N}\!(H)}
\newcommand{\ldg}{\ell^2(G)}
\newcommand{\ldh}{\ell^2(H)}
\newcommand{\cg}{\C{G}}

\title[$L^2$-Index Theorem]{Atiyah's $\,L^2$-Index theorem}
\author{Indira Chatterji and Guido Mislin}
\address{Mathematics Department, Cornell University,
Ithaca NY 14853, USA\\
Mathematics Department, ETHZ, 8092 Z\"urich,
SWITZERLAND}\email{indira@math.cornell.edu, mislin@math.ethz.ch}
\date{July 2, 2003}
\maketitle
\section{Introduction}\label{Intro}
The $L^2$-Index Theorem of Atiyah \cite{atiyah} expresses the index of an elliptic operator on a closed manifold $M$ in terms of the $G$-equivariant index of some regular covering $\widetilde{M}$ of $M$, with $G$ the group of covering transformations. Atiyah's proof is analytic in nature. Our proof is algebraic and involves an embedding of a given group into an acyclic one, together with naturality properties of the indices.
\section{Review of the $L^2$-Index Theorem}\label{2.}
The main reference for this section is Atiyah's paper \cite{atiyah}. All manifolds considered are smooth Riemannian, without boundary. Covering spaces of manifolds carry the induced smooth and Riemannian structure. Let $M$ be a closed manifold and let $E$, $F$ denote two complex (Hermitian) vector bundles over $M$. Consider an elliptic pseudo-differential operator
$$ D: C^{\infty}(M, E)\to C^{\infty}(M, F)$$
acting on the smooth sections of the vector bundles. One defines its space of solutions 
$$S_D=\{s\in C^{\infty}(M, E)\,|\,Ds=0\}.$$
The complex vector space $S_D$ has finite dimension (see \cite{ST}), and so has $S_{D^*}$ the space of solutions of the adjoint $D^*$ of $D$ where
$$ D^*: C^{\infty}(M, F)\to C^{\infty}(M, E)$$
is the unique continuous linear map satisfying
$$\!\left<Ds,s'\right>=\!\int_M\!\!\left<Ds(m),s'(m)\right>_Fdm=\left<s,D^*s'\right>=\!\int_M\!\!\left<s(m),D^*s'(m)\right>_Edm$$
for all $s\in C^{\infty}(M, E)$, $s'\in C^{\infty}(M, F)$. One now defines the \emph{index} of $D$ as follows:
$$\operatorname{Index}(D)=\dim_\C (S_D) - \dim_\C (S_{D^*})\in\mathbb{Z}.$$
An explicit formula for $\operatorname{Index}(D)$ is given by the famous Atiyah-Singer Theorem (cf.~\cite{as}). Consider a not necessarily connected, regular covering $\pi:\widetilde{M}\to M$ with countable covering transformation group $G$. The projection $\pi$ can be used to define an elliptic operator
$$\widetilde{D}:=\pi^*(D):C^{\infty}_c(\widetilde{M},\pi^*E)\to C^{\infty}_c(\widetilde{M},\pi^*F).$$
Denote by $S_{\widetilde{D}}$ the closure of $\{s\in C^{\infty}_c(\widetilde{M},\pi^*E)|\widetilde{D}s=0\}$ in $L^2(\widetilde{M},\pi^*E)$. Let $\widetilde{D}^*$ denote the adjoint of $\widetilde{D}$. The space $S_{\widetilde{D}}$ is not necessarily finite dimensional, but being a closed $G$-invariant subspace of 
the $L^2$-completion $L^2(\widetilde{M},\pi^*E)$ of the space of smooth sections with compact supports $C^{\infty}_c(\widetilde{M},\pi^*E)$, its von Neumann dimension is therefore defined as follows. Write 
$$\vNG=\{P:\ldg\to\ldg\hbox{ bounded and }G\hbox{-invariant}\}$$
for the group von Neumann algebra of $G$, where $G$ acts on $\ldg$ via the right regular representation. Then $S_{\widetilde{D}}$ is a finitely generated Hilbert $G$-module and hence can be represented by an idempotent matrix $P=(p_{ij})\in M_n(\vNG)$ (recall that a finitely generated Hilbert $G$-module is isometrically $G$-isomorphic to a Hilbert $G$-subspace of the Hilbert space $\ldg^n$ for some $n\geq 1$, see \cite{Eckmann}). One then sets
$$\dim_{G}(S_{\widetilde{D}})=\sum_{i=1}^n\left<p_{ii}(e),e\right>=\kappa(P)\in \R,$$
where by abuse of notation $e$ denotes the element in $\ldg$ taking value 1 on the neutral element $e\in G$ and 0 elsewhere (see Eckmann's survey \cite{Eckmann} on $L^2$-cohomology for more on von Neumann dimensions). The map $\kappa:M_n(\vNG)\to\C$ is the Kaplansky trace. One defines the \emph{$L^2$-index} of $\widetilde{D}$ by
$$\operatorname{Index}_G(\widetilde{D})=\dim_{G}(S_{\widetilde{D}}) - \dim_{G}(S_{\widetilde{D}^*}).$$
We can now state Atiyah's $L^2$-Index Theorem.
\begin{thm}[Atiyah \cite{atiyah}]\label{principal}For $D$ an elliptic pseudo-differential operator on a closed Riemannian manifold $M$
$$\operatorname{Index}(D)=\operatorname{Index}_G({\widetilde{D}})$$
for any countable group $G$ and any lift ${\widetilde{D}}$ of $D$ to a regular $G$-cover $\widetilde{M}$ of $M$.
\end{thm}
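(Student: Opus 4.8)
The plan is to reinterpret both sides as traces of a single algebraic index class and then to force the two traces to agree by embedding $G$ into an acyclic group.

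First I would recall that, via the Mishchenko--Fomenko parametrix construction, the lifted operator $\widetilde D$ carries an analytic index class $\operatorname{ind}(\widetilde D)\in K_0(\cg)$, whose image under $K_0(\cg)\to K_0(\vNG)$ is exactly $[S_{\widetilde D}]-[S_{\widetilde D^*}]$. Applying the canonical trace on $\vNG$ then gives $\operatorname{Index}_G(\widetilde D)=\kappa_G(\operatorname{ind}(\widetilde D))$, where I write $\kappa_G$ for the Kaplansky trace on $K_0(\cg)$ (compatible with the von Neumann dimension). In the same language the classical index is $\operatorname{Index}(D)=\varepsilon(\operatorname{ind}(\widetilde D))$, where $\varepsilon\colon K_0(\cg)\to K_0(\C)=\Z$ is induced by the augmentation $G\to\{e\}$; concretely $\kappa_G$ reads off the coefficient at the identity of the Hattori--Stallings rank while $\varepsilon$ sums all coefficients. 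The theorem is therefore the assertion that $\kappa_G$ and $\varepsilon$ agree on the particular class $\operatorname{ind}(\widetilde D)$.

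Next I would establish the naturality properties needed. For an injective homomorphism $\iota\colon G\hookrightarrow A$, pushing the $G$-cover out along $\iota$ produces an $A$-cover whose lifted operator $\widehat D$ satisfies $\operatorname{ind}(\widehat D)=\iota_*\operatorname{ind}(\widetilde D)$ in $K_0(\C A)$; because $\iota$ is injective it carries the identity of $G$ to the identity of $A$ and nothing else, so $\kappa_A\circ\iota_*=\kappa_G$. This yields $\operatorname{Index}_G(\widetilde D)=\operatorname{Index}_A(\widehat D)$, i.e. the $L^2$-index is unchanged under enlarging the covering group along an injection. I would similarly record that $\varepsilon\circ\iota_*=\varepsilon$, so that the ordinary index is manifestly unaffected as well.

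Now I would invoke the embedding of $G$ into an \emph{acyclic} group $A$ (every countable group embeds into an acyclic group, e.g. by a binate or mitotic construction) and reduce the whole theorem to the single case of an acyclic coefficient group. For such $A$ the augmentation induces an isomorphism on the relevant homological invariants, and the Hattori--Stallings rank of any finitely generated projective $\C A$-module is concentrated at the trivial conjugacy class; consequently $\kappa_A=\varepsilon$ on all of $K_0(\C A)$, and in particular on $\operatorname{ind}(\widehat D)$. Combining the three steps gives
\[
\operatorname{Index}_G(\widetilde D)=\operatorname{Index}_A(\widehat D)=\kappa_A(\operatorname{ind}(\widehat D))=\varepsilon(\operatorname{ind}(\widehat D))=\operatorname{Index}(D).
\]
The main obstacle is precisely the acyclic step: proving that on $K_0(\C A)$ the identity-coefficient (Kaplansky) trace coincides with the augmentation trace is a Bass-conjecture-type statement, and the role of acyclicity is exactly to make the obstruction --- which lives in the group homology of centralizers of nontrivial elements --- vanish. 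The geometric naturality of the index class under a covering pushout is a secondary technical point I would need to verify carefully, but the crux is the trace comparison made possible by acyclicity.
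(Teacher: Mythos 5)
Your overall skeleton---reduce to the acyclic case by exploiting naturality of both indices under an embedding $G\hookrightarrow A$ into an acyclic group---is exactly the strategy of the paper, and your naturality step is sound: an injective homomorphism preserves the identity coefficient, hence intertwines the Kaplansky traces, while the augmentation trace is trivially preserved. The genuine gap is in how you settle the acyclic case. You claim that for an acyclic group $A$ the Hattori--Stallings rank of every finitely generated projective $\C A$-module is concentrated at the trivial conjugacy class, so that $\kappa_A=\varepsilon$ on all of $K_0(\C A)$. Acyclicity does not deliver this. The obstructions to $r_P(g)=0$ for $g\neq e$ live in the homology of the centralizers $C_A(g)$, and $H_*(BA;\Z)=0$ says nothing about $H_*(C_A(g))$; moreover, for torsion elements the phenomenon is real rather than obstructed by homology at all (over $\C[\Z/2]$ the sign representation has $\kappa=1/2$ and $\varepsilon=0$), and acyclic groups can have plenty of torsion. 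The known results of this type (e.g.\ for binate groups) use the specific swindle structure of those constructions, not acyclicity per se; for a general acyclic group your statement is an open Bass-conjecture-type problem, not a lemma. There is also a secondary but real issue at the outset: the Mishchenko--Fomenko index class naturally lives in $K_0(C^*_rA)$ (or in $K_0$ of a smoothing, trace-class completion), not in $K_0(\C A)$, so even formulating your trace comparison over the algebraic group ring requires a refinement of the index class that you have not supplied.

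What acyclicity actually buys---and what the paper uses---is topological: $\Sigma BA$ is contractible, so $K_0(B\{e\})\to K_0(BA)$ is an isomorphism of $K$-homology groups. Hence every class in $K_0(BA)$, in particular the one whose assembly image is your index class, comes from a point, where $\operatorname{Index}$ and $\operatorname{Index}_A$ visibly agree by the trivial-cover computation of Example \ref{trivial}. If you restrict your trace comparison to the image of the assembly map $K_0(BA)\to K_0(C^*_rA)$, it reduces to exactly this soft argument; asserting it on all of $K_0(\C A)$ is both unnecessary for the theorem and unproven. To repair the proof, phrase both indices as homomorphisms out of $K_0(BA)$ as in Section \ref{K} and replace the Bass-conjecture step by the computation $K_0(BA)\cong K_0(B\{e\})$.
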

In particular, the $L^2$-index of ${\widetilde{D}}$ is always an integer, even though it is a priori given in terms of real numbers.
The following serves as an illustration of the $L^2$-Index Theorem.
\begin{ex}[Atiyah's formula \cite{atiyah}]\label{deRham}
Let $\Omega^{\bullet}$ be the de Rham complex of complex valued differential forms on the closed connected manifold $M$ and consider the de Rham differential $D= d +d^*: \Omega^{ev}\to \Omega^{odd}$. Let $\pi:\widetilde{M}\rightarrow M$ be the universal cover of $M$ so that $G = \pi_1(M)$. Then
\begin{itemize}
\item $\operatorname{Index}(D)=\chi(M)$, the ordinary Euler characteristic of $M$.
\item $\operatorname{Index}_G({\widetilde{D}})=\sum_{j}(-1)^j\beta^j(M)$, the $L^2$-Euler characteristic of $M$.
\end{itemize}
The $\beta^j(M)$\rq s denote the $L^2$-Betti numbers of $M$. Thus the $L^2$-Index Theorem translates into Atiyah's formula
\[\chi(M)=\sum\limits_j(-1)^j\beta^j(M).\]
We recall that the $L^2$-Betti numbers $\beta^j(M)$ are in general not integers. For instance, if $\pi_1(M)$ is a finite group, one checks that
\[\beta^j(M)=\frac{1}{|\pi_1(M)|}b^j(\widetilde{M}),\]
where $b^j(\widetilde{M})$ stands for the ordinary $j$'th Betti number of the universal cover $\widetilde{M}$ of $M$. In particular, for $1<|\pi_1(M)|<\infty$, $\beta^0({M}) = 1/{|\pi_1(M)|}$ is not an integer and the $L^2$-Index Theorem reduces to the well-known fact that
\[ \chi(M) = \frac{\chi(\widetilde{M})}{|\pi_1(M)|}.\]
It is a conjecture (Atiyah Conjecture) that for a general closed connected manifold $M$ the $L^2$-Betti numbers $\beta^j(M)$ are always rational numbers, and even integers in case that $\pi_1(M)$ is torsion-free. For some interesting examples, which might lead to counterexamples, see Dicks and Schick \cite{DS}.\end{ex}
\section{Hilbert modules}
Recall that for $H<G$ and $X$ an $H$-space, the \emph{induced} $G$-space is
$$G\times_HX=(G\times X)/H$$
where $H$ acts on $G\times X$ via $h\cdot(g,x)=(gh^{-1},hx)$ and the left $G$-action on $G\times_HX$ is given by $g\cdot[k,x]=[gk,x]$ (where $[k,x]$ denotes the class of the pair $(k,x)\in G\times X$ in $G\times_HX$). For $A\subseteq\ldh^n$ a Hilbert $H$-module one defines ${\rm Ind}_H^G(A)$ the \emph{induced} Hilbert $G$-module as follows:
$${\rm Ind}_H^G(A)=\{f:G\to A, f(gh)=h^{-1}f(g), \sum_{\gamma\in G/H}\|f(\gamma)\|^2<\infty\}.$$
On ${\rm Ind}_H^G(A)$ the action of $G$ is given as follows:
$$(\gamma\cdot f)(\mu)=f(\gamma^{-1}\mu),\eqno{\gamma,\mu\in G\hbox{ and }f\in{\rm Ind}_H^G(A).}$$
For $\widetilde{M}$ an $H$-free, cocompact Rieman\-nian manifold and $\widetilde{D}$ an $H$-equi\-variant pseudo-differential operator on $\widetilde{M}$, one can express the lift $\overline{D}$ of $\widetilde{D}$ to $\overline{M}=G\times_H \widetilde{M}$ as follows. Fix a set $R$ of representatives for $G/H$ and write $\pi:\overline{M}\to\widetilde{M}$ for the projection; a section $\overline{s}\in C^{\infty}_c(\overline{M},\pi^*E)$ is a collection
$$\overline{s}=\{\widetilde{s}_r\}_{r\in R},$$
where $\widetilde{s}_r\in C^{\infty}_c(\widetilde{M},E)$ is the zero section for all but finitely many $r$'s, and $\overline{s}([g,\widetilde{m}])=\widetilde{s}_r(h\widetilde{m})$, if  $[r,h\widetilde{m}]=[g,\widetilde{m}]\in G\times_H \widetilde{M}$. Now the lift $\overline{D}$ of $\widetilde{D}$ to $\overline{M}=G\times_H \widetilde{M}$ satisfies
$$\overline{D}\overline{s}=\{\widetilde{D}\widetilde{s}_r\}_{r\in R}.$$
\begin{lemma}\label{mi-trivial}Let $M$ be a closed Riemannian manifold, $D$ a pseudo-differential operator on $M$ and $\widetilde{M}$ a regular cover of $M$ with countable transformation group $H$. Consider an inclusion $H<G$ and form the regular cover $\overline{M}=G\times_H \widetilde{M}$ of $M$. Then for the lifts $\widetilde{D}$ of $D$ to $\widetilde{M}$ and ${\overline{D}}$ of $\widetilde{D}$ to $\overline{M}$,
$$\operatorname{Index}_H({\widetilde{D}})=\operatorname{Index}_G({\overline{D}}).$$\end{lemma}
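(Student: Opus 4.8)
The plan is to identify the solution space $S_{\overline D}$ on the large cover with the induced Hilbert $G$-module ${\rm Ind}_H^G(S_{\widetilde D})$, and then to show that induction leaves the von Neumann dimension unchanged, so that $\dim_G(S_{\overline D})=\dim_H(S_{\widetilde D})$; applying the same to the adjoints and subtracting will yield the claimed equality of indices.

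First I would set up the module identification. After choosing the set $R$ of representatives for $G/H$, the total space $\overline M=G\times_H\widetilde M$ is a disjoint union of copies of $\widetilde M$ indexed by $R$, so the decomposition $\overline s=\{\widetilde s_r\}_{r\in R}$ recalled before the lemma identifies $C^\infty_c(\overline M,\pi^*E)$ with the finitely supported families $(\widetilde s_r)_r$, and the $L^2$-norm satisfies $\|\overline s\|^2=\sum_{r\in R}\|\widetilde s_r\|^2$. Passing to $L^2$-completions, I would encode such a family as an $H$-equivariant function $f\colon G\to L^2(\widetilde M,E)$ with $f(gh)=h^{-1}f(g)$ and $\sum_{\gamma\in G/H}\|f(\gamma)\|^2<\infty$, that is, as an element of the induced module, and check that this correspondence is an isometric $G$-isomorphism intertwining the $G$-action on $\overline M$ with the induced action $(\gamma f)(\mu)=f(\gamma^{-1}\mu)$. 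Since $\overline D$ acts componentwise, $\overline D\overline s=\{\widetilde D\widetilde s_r\}_r$, one has $\overline D\overline s=0$ if and only if each $\widetilde s_r\in S_{\widetilde D}$; by elliptic regularity this persists after $L^2$-completion, so the isomorphism restricts to an identification
$$S_{\overline D}\;\cong\;{\rm Ind}_H^G(S_{\widetilde D})$$
of Hilbert $G$-modules.

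Next I would prove the dimension formula $\dim_G({\rm Ind}_H^G(A))=\dim_H(A)$ for a Hilbert $H$-module $A\subseteq\ldh^n$. Writing $A=P(\ldh^n)$ for an idempotent $P=(p_{ij})\in M_n(\vNH)$, I would use the standard isometric $G$-isomorphism ${\rm Ind}_H^G(\ldh)\cong\ldg$, which assembles the $H$-invariant pieces $\ell^2(\gamma H)\subseteq\ldg$ over $\gamma\in G/H$; it gives ${\rm Ind}_H^G(\ldh^n)\cong\ldg^n$ and exhibits ${\rm Ind}_H^G(A)$ as the image of the idempotent $\iota(P)\in M_n(\vNG)$, where $\iota\colon\vNH\hookrightarrow\vNG$ is the inclusion induced by $\C H\subseteq\cg$. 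The crucial point is that $\iota$ preserves the canonical trace, since the Kaplansky trace reads off the coefficient at the neutral element, and this coefficient is unchanged under $\C H\subseteq\cg$. Hence
$$\dim_G({\rm Ind}_H^G(A))=\kappa(\iota(P))=\kappa(P)=\dim_H(A).$$

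Combining the two steps gives $\dim_G(S_{\overline D})=\dim_H(S_{\widetilde D})$. Because $\overline D^*$ likewise acts componentwise as $\widetilde D^*$, it coincides with the lift of $\widetilde D^*$ and $S_{\overline D^*}\cong{\rm Ind}_H^G(S_{\widetilde D^*})$, so that $\dim_G(S_{\overline D^*})=\dim_H(S_{\widetilde D^*})$ as well; subtracting yields $\operatorname{Index}_G(\overline D)=\operatorname{Index}_H(\widetilde D)$. I expect the main obstacle to be the first step: turning the formal decomposition $\overline s=\{\widetilde s_r\}_r$ into a genuine isometric $G$-isomorphism $S_{\overline D}\cong{\rm Ind}_H^G(S_{\widetilde D})$, carefully matching the equivariance relation $f(gh)=h^{-1}f(g)$ and the two $G$-actions against the chosen coset representatives, and controlling the $L^2$-completions via elliptic regularity. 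The trace-compatibility $\kappa\circ\iota=\kappa$ and the resulting dimension formula are comparatively routine once the matrix representations are in place.
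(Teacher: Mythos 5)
Your proposal is correct and follows essentially the same route as the paper: identify $S_{\overline{D}}$ with ${\rm Ind}_H^G(S_{\widetilde{D}})$ via the componentwise decomposition over coset representatives (the paper's map $\varphi_R$), invoke $\dim_H(A)=\dim_G({\rm Ind}_H^G(A))$, and repeat for the adjoints. The only difference is that you spell out the standard proof of the induction--dimension formula via the trace-preserving inclusion $\vNH\hookrightarrow\vNG$, whereas the paper simply cites it from Eckmann's survey.
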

\begin{proof}
It is enough to see that $S_{\overline{D}}\cong{\rm Ind}_H^G(S_{\widetilde{D}})$. Indeed, it is well-known (see \cite{Eckmann}) that for a Hilbert $H$-module $A$ one has
$$\dim_H(A)=\dim_G({\rm Ind}_H^G(A)).$$
For $R$ a fixed set of representatives for $G/H$, the map
\begin{eqnarray*}
\varphi_R:{\rm Ind}_H^G(S_{\widetilde{D}})&\to& S_{\overline{D}}\\
f&\mapsto&\{f(r)\}_{r\in R}
\end{eqnarray*}
is well-defined by $H$-equivariance of the elements of $S_{\widetilde{D}}$ and one checks that it defines a $G$-equivariant isometric bijection. Similarly for the adjoint operators.
\end{proof}
The following example is a particular case of the previous lemma.
\begin{ex}\label{trivial}Let us look at the case $\widetilde{M}=M\times G$. A section $\widetilde{s}\in C^{\infty}_c(\widetilde{M},\pi^*E)$ is an element $\widetilde{s}=\{s_g\}_{g\in G}$ where $s_g\in C^{\infty}(M, E)$ and $s_g=0$ for all but finitely many $g$'s. Note that $L^2(\widetilde{M},\pi^*E)$ can be identified with $\ldg\otimes L^2(M,E)$. Now
$$\widetilde{D}\widetilde{s}=\{Ds_g\}_{g\in G}\in C^{\infty}_c(\widetilde{M},\pi^*F)$$
and hence $S_{\widetilde{D}}$ may be identified with $\ldg\otimes S_D\cong\ldg^d$, where $d=\dim_{\C}(S_D)$. In this identification the projection $P$ onto $S_{\widetilde{D}}$ becomes the identity in $M_d(\vNG)$ and thus
$$\dim_G(S_{\widetilde{D}})=\sum_{i=1}^d\left<e,e\right>=d=\dim_{\C}(S_D).$$
A similar argument for $D^*$ shows that in this case not only the $L^2$-Index of $\widetilde{D}$ coincides with the Index of $D$, but also the individual terms of the difference correspond to each other. This is not the case in general, see Example \ref{deRham}.
\end{ex}
\section{On $K$-homology}\label{K}
Many ideas of this section go back to the seminal article by Baum and Connes \cite{BC}, which has been circulating for many years and has only recently been published.

\smallbreak

An elliptic pseudo-differential operator $D$ on 
the closed manifold $M$ can also be used to define an element $[D]\in K_0(M)$, the $K$-homology of $M$, 
and according to Baum and Douglas \cite{bd}, all elements of $K_0(M)$
are of the form $[D]$.
The
index defined in Section \ref{2.} extends to a well-defined homomorphism (cf.~\cite{bd})
$$\operatorname{Index}: K_0(M)\to \Z,$$
such that $\operatorname{Index}([D])=\operatorname{Index}(D)$. On the other hand, the projection $\operatorname{pr}:M\to \{pt\}$ induces, after identifying $K_0(\{pt\})$ with $\Z$, a homomorphism
$$\operatorname{pr}_* : K_0(M)\to\Z,\eqno{(*)}$$
which, as explained in \cite{bd}, satisfies
$$\operatorname{pr}_* ([D]) = \operatorname{Index}([D]).$$
More generally (cf.~\cite{bd}), for a not necessarily finite CW-complex $X$, every $x\in K_0(X)$ is of the form $f_*[D]$ for some $f:M\to X$, and $K_0(X)$ is obtained as a colimit over $K_0(M_{\alpha})$, where the $M_{\alpha}$ form a directed system consisting of closed Riemannian manifolds (these homology groups $K_0(X)$ are naturally isomorphic to the ones defined using the Bott spectrum; sometimes, they are referred to
as $K$-homology groups with {\em compact supports}). 
The index map from above extends to a homomorphism
$$\operatorname{Index}: K_0(X)\to \Z,$$
such that $\operatorname{Index}(x)=\operatorname{Index}([D])$ if
$x=f_*[D]$, with $f:M\to X$. 

We now consider the case of $X=BG$, the
classifying space of the discrete group $G$, and obtain thus
for any $f:M\to BG$ a commutative diagram 
$$\begin{CD}
K_0(M)  @>{\operatorname{Index}}>> \mathbb{Z}\\
@V{f_*}VV      @|\\
K_0(BG)    @>{\operatorname{Index}}>> \ \mathbb{Z}.\end{CD}$$
Note that $(*)$ from above implies the following naturality
property for the index homomorphism.
\begin{lemma}\label{IndiceNaturel}For any homomorphism $\varphi:H\to G$ one has a commutative diagram
$$\begin{CD}
K_0(BH)  @>{\operatorname{Index}}>> \mathbb{Z}\\
@V{(B\varphi)_*}VV      @|\\
K_0(BG)    @>{\operatorname{Index}}>> \ \mathbb{Z}.\end{CD}$$
\end{lemma}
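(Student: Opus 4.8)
The plan is to reduce the assertion to the manifold level, where it becomes the identity $\operatorname{pr}_*([D]) = \operatorname{Index}([D])$ recorded in $(*)$, and then to propagate it upward using functoriality of the pushforward in $K$-homology together with the two facts already established: that every $K$-homology class is represented by an elliptic operator on a closed manifold, and that the extended index map on $K_0(X)$ depends only on the $K$-homology class. First I would fix an arbitrary $x \in K_0(BH)$ and, invoking the surjectivity statement from \cite{bd} recalled above, write $x = f_*[D]$ for some closed Riemannian manifold $M$, an elliptic pseudo-differential operator $D$ on $M$, and a map $f: M \to BH$.

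Next I would run the two routes of the diagram and check that both land on $\operatorname{Index}(D)$. Applying the commutative square displayed above for $BH$ (with the given $f$) yields $\operatorname{Index}(x) = \operatorname{Index}(f_*[D]) = \operatorname{Index}([D]) = \operatorname{Index}(D)$. For the other route, functoriality of the pushforward gives $(B\varphi)_* x = (B\varphi)_* f_*[D] = (B\varphi \circ f)_*[D]$, and here $B\varphi \circ f$ is a map from the \emph{closed manifold} $M$ into $BG$. Applying the commutative square for $BG$ to this composite then gives $\operatorname{Index}((B\varphi)_* x) = \operatorname{Index}((B\varphi \circ f)_*[D]) = \operatorname{Index}([D]) = \operatorname{Index}(D)$. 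Since both composites $K_0(BH) \to \Z$ send $x$ to $\operatorname{Index}(D)$, and $x$ was arbitrary, the diagram commutes.

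The one place requiring genuine care, and the only potential obstacle, is the appeal to the well-definedness of $\operatorname{Index}$ on $K_0(X)$, namely that the value $\operatorname{Index}(f_*[D])$ depends only on the class $f_*[D]$ and not on the particular choice of $M$, $D$, and $f$ representing it. This is precisely what is asserted in the extension of the index homomorphism to $K_0(X)$ discussed just before the lemma (following \cite{bd}); granting it, the argument above is a purely formal diagram chase. I would therefore present the proof as a direct consequence of that extension property and the functoriality $(B\varphi \circ f)_* = (B\varphi)_* \circ f_*$, with no further analytic input needed.
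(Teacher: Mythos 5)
Your argument is correct and is essentially the proof the paper intends: the lemma is left as an immediate consequence of $(*)$ and the extension of $\operatorname{Index}$ to $K_0(X)$, and your diagram chase (write $x=f_*[D]$, apply the displayed square to both $f:M\to BH$ and $B\varphi\circ f:M\to BG$, use functoriality of pushforward) is exactly how that implication is spelled out. You also correctly identify the only real input, namely the well-definedness of $\operatorname{Index}$ on $K_0(X)$ independent of the representative $(M,D,f)$, which the paper takes from Baum--Douglas.
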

\begin{flushright}$\square$\end{flushright}
We now turn to the $L^2$-index
of Section \ref{2.}. It extends to a homomorphism
$$\operatorname{Index}_G:K_0(BG)\to\mathbb{R}$$
as follows. Each $x\in K_0(BG)$ is of the form $f_*(y)$ for some $y=[D]\in K_0(M)$, $f:M\to BG$, $M$ a closed smooth manifold and $D$ an elliptic operator on $M$.
Let $\widetilde{D}$ be the lifted operator to $\widetilde{M}$, 
the $G$-covering space induced by $f:M\to BG$. Then put
$$\operatorname{Index}_G(x):=\operatorname{Index}_G(\widetilde{D}).$$
One checks that $\operatorname{Index}_G(x)$ is indeed
well-defined, either
by direct computation, or by identifying it with $\tau(x)$, where
$\tau$ denotes the composite of the assembly map $K_0(BG)\to
K_0(C^*_rG)$ with the natural trace $K_0(C^*_rG) \to \R$ (for this
latter point of view, see
Higson-Roe \cite{hr}; for a discussion of the assembly map see e.g. Kasparov~\cite{Kas}, or Valette~\cite{Valette}). The following naturality property of this index map is a consequence of Lemma \ref{mi-trivial}.
\begin{lemma}\label{GIndiceNaturel}
For $H<G$ the following diagram commutes
$$\begin{CD}
K_0(BH)  @>{\operatorname{Index}_H}>> \mathbb{R}\\
@VVV      @|\\
K_0(BG)    @>{\operatorname{Index}_G}>> \ \mathbb{R}.\end{CD}$$
\end{lemma}
\begin{flushright}$\square$\end{flushright}
Atiyah's $L^2$-Index Theorem \ref{principal} for a given $G$ can now be expressed as the statement (as already observed in \cite{hr})
$$\operatorname{Index}_G=\operatorname{Index}:K_0(BG)\to\mathbb{R}.$$
\section{Algebraic proof of Atiyah's $L^2$-index theorem}\label{proof}
Recall that a group $A$ is said to be \emph{acyclic} if $H_*(BA,\Z)=0$ for $*>0$. For $G$ a countable group, there exists an embedding $G \rightarrow A_G$ into a countable acyclic group $A_G$. There are many constructions of such a group $A_G$ available in the literature, see for instance Kan-Thurston \cite[Proposition 3.5]{kt}, Berrick-Varadarajan \cite{BerrVarad} or Berrick-Chatterji-Mislin
\cite{bcm}; these different constructions are to be compared in Berrick's forthcoming work \cite{Berrick}. It follows that the suspension $\Sigma BA_G$ is contractible, and therefore the inclusion $\{e\} \rightarrow A_G$ induces an isomorphism
\[K_0(B\{e\})\overset{\cong}{\longrightarrow} K_0(BA_G).\]
Our strategy is as follows. We show that the Atiyah $L^2$-Index Theorem holds in the special case of acyclic groups, and finish the proof combining the above embedding of a group into an acyclic group.
\begin{proof}[Proof of Theorem \ref{principal}] If a group $A$ is acyclic, the equation $\operatorname{Index}_A=\operatorname{Index}$ follows from the diagram
$$\begin{CD}
K_0(BA)@>{\operatorname{Index}_A}>> \mathbb{R}@<{\operatorname{Index}}<< K_0(BA)\\
@A{\cong}AA     @AAA   @A{\cong}AA \\
K_0(B\{e\})@>{\operatorname{Index}_{\{e\}}}>{\cong}>  \mathbb{Z}@<{\operatorname{Index}}<{\cong}< K_0(B\{e\})\\
\end{CD}$$
because $\operatorname{Index}_{\{e\}}=\operatorname{Index}$ on the bottom line. For a general group $G$, consider an embedding into an acyclic group $A_G$ and complete the proof by using Lemma \ref{mi-trivial}, together with Lemmas \ref{IndiceNaturel} and \ref{GIndiceNaturel}.
\end{proof}


\end{document}